\theoremstyle{plain}
\newtheorem{thm}{Theorem}
\newtheorem{lem}[thm]{Lemma}
\newtheorem{cor}[thm]{Corollary}
\theoremstyle{definition}
\newcommand\Z{\mathbb{Z}}
\newcommand\eps{\varepsilon}
\newcommand\cP{\mathcal{P}}
\newcommand\comp{^{\mathrm{c}}}
\renewcommand{\le}{\leqslant}
\renewcommand{\ge}{\geqslant}
\begin{document}

\title{A note on Linnik's Theorem on quadratic non-residues}

\author[P. Balister]{Paul Balister}
\address{Department of Mathematical Sciences, University of Memphis, Memphis TN 38152, USA}
\email{pbalistr@memphis.edu}

\author[B. Bollob\'as]{B\'ela Bollob\'as}
\address{Department of Pure Mathematics and Mathematical Statistics, University of Cambridge,
Wilberforce Road, Cambridge CB3\thinspace0WB, UK; \emph{and} Department of Mathematical Sciences, University of Memphis, Memphis TN 38152, USA; \emph{and} London Institute for Mathematical Sciences, 35a South St., Mayfair, London W1K\thinspace2XF, UK}
\email{b.bollobas@dpmms.cam.ac.uk}

\author[J.D. Lee]{Jonathan D. Lee}
\address{Microsoft Research, Redmond, USA}
\email{jonatlee@microsoft.com}

\author[R. Morris]{Robert Morris}
\address{IMPA, Estrada Dona Castorina 110, Jardim Bot\^anico, Rio de Janeiro, RJ, Brazil}
\email{rob@impa.br}

\author[O. Riordan]{Oliver Riordan}
\address{Mathematical Institute, University of Oxford, Radcliffe Observatory Quarter,
Woodstock Road, Oxford OX2\thinspace6GG, UK}
\email{riordan@maths.ox.ac.uk}

\maketitle

\begin{abstract}
We present a short, self-contained and purely combinatorial proof of Linnik's theorem:
for any $\eps>0$ there exists a constant $C_\eps$ such that for any $N$, there are at most $C_\eps$
primes $p\le N$ such that the least positive quadratic non-residue modulo $p$ exceeds $N^\eps$.
\end{abstract}

\section{Introduction}
In 1941, Linnik~\cite{linniksieve} developed the Large Sieve and used it (in~\cite{linniknqr}) to
prove that for any $\eps>0$ there is a constant $C_\eps$ such that for all~$N$, there are at most
$C_\eps$ primes $p\le N$ such that the least quadratic non-residue modulo $p$ exceeds~$N^\eps$.
The Large Sieve was subsequently developed as a theorem about
quasi-independent functions by R\'enyi~\cite{renyi1,renyi2,renyi3,renyi4}, and as a theorem on
duality and an approximate Plancherel identity by Bombieri~\cite{bombieri} and Roth~\cite{roth}.
Modern presentations include~\cite{CMR2006,cribo}.

The techniques of these proofs are ultimately analytic, and relate Fourier analysis on the circle
to the discrete Fourier transform on $\Z/p\Z$. In this paper, we present a direct
combinatorial (or `elementary') proof of Linnik's theorem, based on an explicit combinatorial sieve.
To emphasize the elementary nature of this proof, we also include proofs of some (very weak)
estimates on sums of the reciprocals of the primes and smooth numbers.
In doing so, we avoid using standard results such
as the prime number theorem, which despite having an elementary proof (see~\cite{PNT}), is in
fact far deeper than the result we wish to show.

\section{Estimate on prime numbers}

The following is a very weak form of Mertens' Theorem~\cite{Mertens}. For completeness
we include an elementary proof, inspired by a proof given by Erd\H{o}s.

\begin{lem}\label{sum1overp}
For any fixed\/ $\eps\in(0,1)$, we have
\[
\sum_{\substack{n^{1-\eps} < \, p \, \le\, n\\p\text{ prime}}}\frac{1}{p} \, \ge \, \eps + o(1)
\]
as $n \to \infty$. 
\end{lem}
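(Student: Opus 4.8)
The plan is to pass to the weighted prime sum $\sum_{p\le x}\frac{\log p}{p}$, which is directly accessible from the prime factorization of $n!$, and only convert back to the unweighted sum at the very end. Concretely, I would first establish that
\[
\sum_{\substack{p\le x\\ p\text{ prime}}}\frac{\log p}{p}=\log x+O(1),
\]
and then observe that, since $\log p\le\log n$ for every prime $p\le n$, we have $\frac1p\ge\frac{\log p}{p\log n}$, whence
\[
\sum_{n^{1-\eps}<p\le n}\frac1p \ \ge\ \frac1{\log n}\Bigl(\sum_{p\le n}\frac{\log p}{p}-\sum_{p\le n^{1-\eps}}\frac{\log p}{p}\Bigr).
\]
Applying the first display with $x=n$ and with $x=n^{1-\eps}$ bounds the bracket below by $\bigl(\log n-O(1)\bigr)-\bigl((1-\eps)\log n+O(1)\bigr)=\eps\log n-O(1)$, so the right-hand side is $\eps-O(1/\log n)=\eps+o(1)$; here $\eps$ is fixed while the implied constants are absolute, which is precisely why the error is $o(1)$.

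It remains to prove the displayed estimate $\sum_{p\le x}\frac{\log p}{p}=\log x+O(1)$, which I would assemble from three elementary ingredients. (i) A crude Stirling bound $\log n!=\sum_{k\le n}\log k=n\log n-n+O(\log n)$, obtained by comparing the sum with $\int_1^n\log t\,dt$. (ii) Legendre's formula $\log n!=\sum_{p\le n}v_p(n!)\log p$ with $v_p(n!)=\sum_{k\ge1}\lfloor n/p^k\rfloor$, together with the two-sided bound $\frac np-1<v_p(n!)\le\frac{n}{p-1}=\frac np+\frac{n}{p(p-1)}$. Feeding the \emph{upper} bound on $v_p(n!)$ into Legendre's formula gives $\log n!\le n\sum_{p\le n}\frac{\log p}{p}+n\sum_p\frac{\log p}{p(p-1)}$, and since $\sum_p\frac{\log p}{p(p-1)}$ converges this yields $\sum_{p\le n}\frac{\log p}{p}\ge\log n+O(1)$ after dividing by $n$ and using (i). Feeding in the \emph{lower} bound on $v_p(n!)$ gives $\log n!\ge n\sum_{p\le n}\frac{\log p}{p}-\theta(n)$, where $\theta(n)=\sum_{p\le n}\log p$; so to obtain the matching bound $\sum_{p\le n}\frac{\log p}{p}\le\log n+O(1)$ I need (iii) a Chebyshev-type estimate $\theta(n)=O(n)$. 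For (iii) I would use the Erd\H{o}s-style fact that $\prod_{n<p\le2n}p$ divides $\binom{2n}{n}\le4^n$, so that $\theta(2n)-\theta(n)\le n\log4$, and then sum this over the dyadic scale to conclude $\theta(n)\le(\log4)\,n$.

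The ideas here are all standard, so I expect the main obstacle to be bookkeeping rather than insight: one must use the correct side of the sandwich $\frac np-1<v_p(n!)\le\frac{n}{p-1}$ in each of the two directions, and check that the error terms really are $O(1)$ uniformly in $x$, so that after the division by $\log n$ in the final step the bound collapses to exactly $\eps+o(1)$. It is worth noting that the inequality we prove is not tight — Mertens' theorem gives $\sum_{n^{1-\eps}<p\le n}\frac1p=-\log(1-\eps)+o(1)\ge\eps+o(1)$ — but the weak one-sided form above is all that is needed.
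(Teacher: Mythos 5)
Your proof is correct, but it takes a genuinely different route from the one printed in the paper. You first establish the full Mertens-type estimate $\sum_{p\le x}\frac{\log p}{p}=\log x+O(1)$ by combining Legendre's formula and the sandwich $\frac{n}{p}-1<v_p(n!)\le\frac{n}{p-1}$ with an elementary Stirling bound and a Chebyshev estimate $\theta(n)=O(n)$ (via $\prod_{n<p\le 2n}p\mid\binom{2n}{n}\le 4^n$), and only then convert to the unweighted sum using $\frac1p\ge\frac{\log p}{p\log n}$; note that you genuinely need both directions of the weighted estimate (the lower bound at $x=n$ and the upper bound at $x=n^{1-\eps}$), which is why the Chebyshev input is indispensable in your argument. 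The paper instead works with a single multinomial coefficient: choosing $a_1+\dots+a_t=n$ to maximize $n!/(a_1!\cdots a_t!)\ge t^n/(n+1)^t$, bounding the $p$-adic valuation of this coefficient by $\frac{n}{p-1}$ for the primes in $(n^{1-\eps},n]$ and by $O(t\log n)$ for the primes up to $n^{1-\eps}$, and then taking $t=n^{\eps}/(\log n)^2$; this yields the required one-sided inequality in one pass, with no Stirling formula, no separate Chebyshev bound, and no two-sided Mertens estimate. What your route buys is the stronger and reusable statement $\sum_{p\le x}\frac{\log p}{p}=\log x+O(1)$ (from which partial summation even gives the sharp constant $-\log(1-\eps)$, as you note); what the paper's route buys is brevity and self-containment, its only combinatorial input being the count of at most $(n+1)^t$ terms in the multinomial expansion. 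One trivial quibble with your sketch: summing $\theta(2m)-\theta(m)\le m\log 4$ over dyadic scales gives $\theta(n)\le 2n\log 4+O(1)$ for general $n$ rather than $(\log 4)n$ (the sharper constant needs the $\binom{2m+1}{m}$ induction), but since only $\theta(n)=O(n)$ is used this does not affect the argument.
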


\begin{proof}%[Alternative proof of Lemma~\ref{sum1overp}.]
The form of the statement allows us to assume that $n$ is large, so replacing $n$ by $\lfloor n\rfloor$ we may assume that $n$ is an integer.

First note the elementary result that a prime $p$ divides $n!$ exactly $\sum_{k\ge 1}\lfloor\frac{n}{p^k}\rfloor<\frac{n}{p-1}$ times. Thus if $a_1+\dots+a_t=n$,
then the number of times $p$ divides the multinomial coefficient $n!/(a_1!\dots a_t!)$ is at most $\frac{n}{p-1}$, but is also
\begin{align*}
 \sum_{k \ge 1} \bigg( \Big\lfloor\frac{n}{p^k}\Big\rfloor - \sum_{i=1}^t\Big\lfloor\frac{a_i}{p^k}\Big\rfloor \bigg)
 & \, \le \, \sum_{k\ge 1,\,p^k\le n} \bigg( \frac{n}{p^k} - \sum_{i=1}^t \left( \frac{a_i}{p^k}-1 \right) \bigg)\\
 & \, = \, \sum_{k\ge1,\,p^k\le n} t \, = \, O\big( t \log n \big).
\end{align*}
However, the multinomial expansion of
\[
 t^n=(1+\dots+1)^n=\sum_{a_1+\dots+a_t=n}\frac{n!}{a_1!\dots a_t!},
\]
contains at most $(n+1)^t$ terms, as each $a_i$ satisfies $0\le a_i\le n$. Thus if we choose
$a_1,\dots,a_t$ so as to maximize the multinomial coefficient, we have $n!/(a_1!\dots a_t!)\ge t^n/(n+1)^t$.
Thus by considering the prime factorization of $n!/(a_1!\dots a_t!)$ we have, for any $r<n$,
\begin{align*}
 n\log t & \, \le \, \log\frac{n!}{a_1!\dots a_t!}+O(t\log n)\\
 & \, \le \, \sum_{\substack{r<p\le n\\p\text{ prime}}}\frac{n}{p-1}\log p
 +\sum_{\substack{p\le r\\p\text{ prime}}}O(t\log n)\log p+O(t\log n)\\
 & \, \le \, \sum_{\substack{r<p\le n\\p\text{ prime}}}\frac{n\log n}{p-1}+O\big( rt (\log n)^2 \big).
\end{align*}
Dividing by $n\log n$ gives
\[
 \sum_{\substack{r<p\le n\\p\text{ prime}}}\frac{1}{p-1}
 \ge\frac{\log t}{\log n}-O\bigg( \frac{rt\log n}{n} \bigg).
\]
Let $r=n^{1-\eps}$ and $t=n^\eps/(\log n)^2$. Then, noting that
\[
 \frac{1}{r}=\sum_{i>r}\bigg( \frac{1}{i-1} - \frac{1}{i} \bigg) \ge
 \sum_{\substack{r<p\le n\\p\text{ prime}}}\bigg( \frac{1}{p-1} - \frac{1}{p} \bigg),
\]
we have
\begin{align*}
 \sum_{\substack{n^{1-\eps}<p\le n\\p\text{ prime}}}\frac{1}{p}
 & \, \ge \, \bigg(\sum_{\substack{n^{1-\eps}<p\le n\\p\text{ prime}}}\frac{1}{p-1}\bigg)-\frac{1}{n^{1-\eps}}\\
 & \, \ge \, \frac{\eps\log n-2\log\log n}{\log n} - \frac{O(1)}{\log n} - \frac{1}{n^{1-\eps}}\\
 & \, \ge \, \eps - o(1)
\end{align*}
as $n \to \infty$, as required.
\end{proof}

\section{Estimate on smooth numbers}

We recall that a positive integer $m$ is called $y$\emph{-smooth} if for any prime $p$ dividing~$m$, $p\le y$. One of the earliest results on the number $\Psi(n,y)$ of $y$-smooth numbers in the set $\{1,\dots,\lfloor n\rfloor\}$ was obtained by Dickman~\cite{Dickman} in 1930. We will need the following weak version of his theorem; once again, we include an elementary proof for completeness.

\begin{lem}\label{psi-1/u}
For any $u > 0$, there exists a constant $c_u>0$ such that
\[
 \Psi\big( n, n^{1/u} \big) \ge c_u n
\]
for all real\/ $n\ge1$.
\end{lem}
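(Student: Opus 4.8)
The plan is to count the $n^{1/u}$-smooth numbers up to $n$ from below by explicitly constructing many of them as products of small prime powers, using the estimate on $\sum 1/p$ from Lemma~\ref{sum1overp} to guarantee that there are enough primes below $n^{1/u}$ to build $\Omega(n)$ distinct smooth numbers. Concretely, I would fix an integer $k$ (depending on $u$) and consider products $p_1 p_2 \cdots p_k$ where the $p_i$ range over primes in a dyadic-type window, say primes in $(n^{1/(2k)}, n^{1/k}]$ or more conveniently primes $p$ with $p \le n^{1/u}$; each such product is automatically $n^{1/u}$-smooth provided $k/u$ is bounded appropriately, and is at most $n$ provided each prime is at most $n^{1/k}$ and $k \le u$. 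The number of such products (with repetition allowed, or restricting to squarefree products of distinct primes) is controlled from below by a power of the number of available primes, and the number of collisions — distinct tuples giving the same product — is controlled by the divisor function, which is $n^{o(1)}$.

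The key steps, in order, are: (1) choose $k = \lceil u \rceil$ (or $k=\lfloor u\rfloor$, adjusting constants) so that any product of $k$ primes each at most $n^{1/k}$ lies in $[1,n]$ and is $n^{1/u}$-smooth; (2) apply Lemma~\ref{sum1overp} with a suitable $\eps$ to show that the primes in $(n^{1/(2k)}, n^{1/k}]$ have reciprocal sum bounded below by a positive constant, hence there are at least $c\, n^{1/k}/\log n$ of them — actually, for the counting it is cleaner to note there are at least some number $P \ge c n^{1/k}/\log n$ of primes in this range; (3) form all $\binom{P}{k}$ squarefree products of $k$ distinct such primes, each lying in $(n^{1/2}, n]$ and each $n^{1/u}$-smooth; (4) observe that the map from $k$-subsets to their product is injective (a squarefree integer determines its prime factors), so this already produces $\binom{P}{k} \ge (P/k)^k \ge c' n/(\log n)^k$ distinct smooth numbers — which is \emph{not quite} $\Omega(n)$ because of the $(\log n)^k$ loss.

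To remove the logarithmic loss I would instead allow a few more prime factors: take $k' = k + \lceil (\log\log n) \rceil$-ish many primes, or — cleaner — partition $[1,n^{1/k}]$-primes into the window and multiply by an extra factor ranging over \emph{all} smooth numbers in a short range, setting up an induction on $u$. The cleanest route is induction: write $\Psi(n,n^{1/u}) \ge \sum_{p \le n^{1/u}} \Psi(n/p, (n/p)^{1/u'})$-type recursion, or more simply observe that if $m \le n^{1-1/u}$ is $n^{1/u}$-smooth and $p \le n^{1/u}$ is prime with $n^{1-1/u} < p \cdot(\text{something})$... . Honestly the slickest argument: every integer $m \le n$ can be written as $m = s \cdot q$ with $s$ its $n^{1/u}$-smooth part; the number of possible \emph{non}-smooth parts $q$ (those divisible by some prime $> n^{1/u}$, with the rough part having at most $u$ prime factors) is $O(n^{1-1/u} \cdot u)$ summed appropriately — no. I would commit to the induction on $\lceil u \rceil$: the base case $u \le 1$ is trivial since every integer is $n$-smooth, and the inductive step multiplies smooth numbers below $n^{1-1/u}$ by primes in a dyadic block near $n^{1/u}$, using Lemma~\ref{sum1overp} to count the primes and absorbing the $\log$ factor because at each of the $\lceil u\rceil$ stages we gain a genuine positive-proportion factor rather than a $1/\log$ factor.

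\medskip

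\textbf{Main obstacle.} The real difficulty is the logarithmic loss: naively multiplying $k$ primes, each counted by the prime-counting function, costs a factor $(\log n)^{-k}$, so one does \emph{not} immediately get a positive proportion of $[1,n]$. Overcoming this is exactly why one wants an inductive/recursive structure (or a weighting by the number of representations) so that the density stays bounded below by a constant at every step; getting the bookkeeping of that recursion right — ensuring the ranges multiply correctly to $[1,n]$ and the smoothness bound $n^{1/u}$ is maintained throughout — is the technical heart of the argument.
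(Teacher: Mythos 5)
Your final plan heads in the right direction (induction on roughly $u$, with the inductive step supplying one prime factor from a block just below $n^{1/u}$ and invoking Lemma~\ref{sum1overp}), and this is indeed the paper's strategy. But the proposal has a genuine gap: it never actually resolves the logarithmic loss that you correctly identify as the main obstacle, and the specific inductive step you commit to would not resolve it. If you fix the cofactor range in advance --- ``smooth numbers below $n^{1-1/u}$'' --- and then multiply by primes in a block, your count is (number of admissible cofactors) $\times$ (number of primes in the block), and the only way Lemma~\ref{sum1overp} counts primes in a block $(y^{1-\eps},y]$ is via $\#\{p\} \ge \eps\, y^{1-\eps}(1+o(1))$ (each reciprocal is at most $y^{-(1-\eps)}$); this gives at best $c\,n^{1-1/u}\cdot \eps\, n^{(1-\eps)/u}$, a \emph{power} loss, while a genuinely dyadic block $(y/2,y]$ corresponds to $\eps\to 0$ and Lemma~\ref{sum1overp} (stated for fixed $\eps$) gives nothing at all. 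The missing idea is to let the cofactor range depend on $p$: decompose by the largest prime factor, $\Psi(n,y)=1+\sum_{p\le y}\Psi(n/p,p)$, bound each term $\Psi(n/p,p)\ge c\,\tfrac{n}{p}$ by the induction hypothesis, and then sum the \emph{reciprocals} $\sum_p 1/p \ge \eps - o(1)$ over the block via Lemma~\ref{sum1overp}; it is this weighting by $n/p$, not a count of primes, that avoids any $\log$ loss. Saying the log factor is ``absorbed'' is exactly the step that is not justified.

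There is also a bookkeeping error in the induction parameter. For $p$ in a block $(n^{1/u'},n^{1/u}]$ the cofactor satisfies $\log(n/p)/\log p \le u'-1$, so the smoothness parameter only drops by $u'-u$, which must be strictly less than $1$ for the block to be nonempty; hence an induction on $\lceil u\rceil$ with unit decrements cannot be run as stated. The paper takes $u'=u+\tfrac12$, so the parameter drops by $\tfrac12$ each time, and inducts on $\lfloor 2u\rfloor$; with that choice, and with the decomposition above, Lemma~\ref{sum1overp} applied with $\eps=\tfrac{1}{2u'}$ gives $\Psi(n,n^{1/u})\ge c_{u-1/2}\bigl(\tfrac{1}{2u'}-o(1)\bigr)n$, and the $o(1)$ (plus small $n$) is handled by adjusting the constant, since $\Psi(n,n^{1/u})\ge 1$ always. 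These repairs are exactly the ``technical heart'' you flag but do not carry out, so as written the argument is incomplete.
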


\begin{proof}
We induct on $\lfloor 2u\rfloor$.
If $u \le 1$, then $n^{1/u}\ge n$ and $\Psi(n,n^{1/u})=\lfloor n\rfloor$, so we can take $c_u=1/2$.
Suppose $u>1$. By decomposing the set of $y$-smooth numbers by their largest
prime factor we have
\begin{equation}\label{eq:smooth:decomp}
 \Psi(n,y)=1+\sum_{\substack{p \, \le \, y\\p\text{ prime}}}\Psi\big(\tfrac{n}{p},p\big).
\end{equation}
Let $u' = u+\frac{1}{2}$, and note that $\log(n/p) / \log p \le u'-1 = u - \frac{1}{2}$ for any prime $p$ in the range $n^{1/u'} < p \le n^{1/u}$. Hence, by the induction hypothesis, we have $\Psi(\tfrac{n}{p},p)\ge c_{u-1/2} \, \frac{n}{p}$ for each such $p$, and therefore, by~\eqref{eq:smooth:decomp}, 
\[
 \Psi\big( n, n^{1/u} \big) \ge \sum_{\substack{n^{1/u'} < \, p \, \le \, n^{1/u}\\p\text{ prime}}}\Psi\big(\tfrac{n}{p},p\big) \, \ge \, c_{u-1/2} \sum_{\substack{n^{1/u'} < \, p \, \le \, n^{1/u}\\p\text{ prime}}} \frac{n}{p}.
\]
Now, $n^{1/u'}=(n^{1/u})^{1-\eps}$ with $\eps=1-\frac{u}{u'}=\frac{1}{2u'}$, and therefore, by Lemma~\ref{sum1overp},
\[
 \Psi\big( n, n^{1/u} \big) \, \ge \, c_{u-1/2} \sum_{\substack{n^{1/u'} < \, p \, \le \, n^{1/u}\\p\text{ prime}}} \frac{n}{p} \, \ge \, c_{u-1/2} \bigg( \frac{1}{2u'} - o(1) \bigg) n
\]
as $n \to \infty$. Since $\Psi(n,n^{1/u})\ge1$ for all $n\ge1$, it follows that there exists a $c_u>0$ such that $\Psi(n,n^{1/u})\ge c_u n$ for all $n\ge1$, as claimed.
\end{proof}

\section{The combinatorial sieve}

The following simple combinatorial inequality is in fact the key ingredient of the proof.

\begin{lem}\label{comb-sieve-2}
For any sets $A_1,\dots,A_d\subseteq [n]=\{1,2,\ldots,n\}$,
\[
 (d+1)^2\Big|\bigcap_{i=1}^{d}A_i\Big|\le (d+1)^2n-4d\sum_{i=1}^{d}|A\comp_i|+4\sum_{i\ne j}|A\comp_i\cap A\comp_j|.
\]
\end{lem}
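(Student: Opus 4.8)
The plan is to reduce the inequality to an essentially trivial pointwise bound via a second-moment computation. For each $x\in[n]$, let $f(x)=\big|\{i\in[d]:x\in A_i\comp\}\big|=\sum_{i=1}^d\mathbbm 1[x\in A_i\comp]$ count how many of the sets $A_i$ miss the point $x$. Then $x\in\bigcap_{i=1}^d A_i$ precisely when $f(x)=0$, so $\big|\bigcap_{i=1}^d A_i\big|=\big|\{x\in[n]:f(x)=0\}\big|$.

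First I would evaluate the three quantities on the right-hand side by summing over $x$ and interchanging the order of summation: $\sum_{x\in[n]}1=n$, $\sum_{x\in[n]}f(x)=\sum_{i=1}^d|A_i\comp|$, and $\sum_{x\in[n]}f(x)^2=\sum_{i,j}|A_i\comp\cap A_j\comp|$. Splitting off the diagonal $i=j$ in the last sum gives $\sum_{i\ne j}|A_i\comp\cap A_j\comp|=\sum_{x}f(x)^2-\sum_{x}f(x)$. Substituting these identities, the right-hand side of the claimed inequality becomes
\[
 (d+1)^2n-4d\sum_{x}f(x)+4\Big(\sum_{x}f(x)^2-\sum_{x}f(x)\Big)=\sum_{x\in[n]}\Big((d+1)^2-4(d+1)f(x)+4f(x)^2\Big)=\sum_{x\in[n]}\big(2f(x)-(d+1)\big)^2,
\]
where the first equality uses $\sum_x 1=n$ and the second is the elementary identity $(2m-(d+1))^2=4m^2-4(d+1)m+(d+1)^2$.

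Finally, since every term $\big(2f(x)-(d+1)\big)^2$ is non-negative, I would discard those with $f(x)\ne0$ and keep only those with $f(x)=0$, each of which contributes exactly $(d+1)^2$:
\[
 \sum_{x\in[n]}\big(2f(x)-(d+1)\big)^2\ \ge\ \sum_{\substack{x\in[n]\\f(x)=0}}(d+1)^2\ =\ (d+1)^2\big|\{x\in[n]:f(x)=0\}\big|\ =\ (d+1)^2\Big|\bigcap_{i=1}^d A_i\Big|,
\]
which is exactly the desired bound. There is no real obstacle here: the entire content is the algebraic identity in the preceding display, after which the inequality says no more than that a sum of non-negative terms is at least the sum of the subcollection one chooses to keep. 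The only place to be vigilant is the bookkeeping of the diagonal when passing between $\sum_{i,j}$ and $\sum_{i\ne j}$.
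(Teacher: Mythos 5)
Your proof is correct, and it is essentially the paper's argument: the paper also counts each point's contribution to the right-hand side, noting that a point lying in $\ell$ of the sets $A_i\comp$ is counted $(d+1)^2-4d\ell+4\ell(\ell-1)=(d+1-2\ell)^2\ge0$ times, which is exactly your $\big(2f(x)-(d+1)\big)^2$ after the same diagonal bookkeeping. Your version merely makes the interchange of summation explicit; no substantive difference.
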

\begin{proof}
Suppose $x\in [n]$ is in $\ell$ of the sets $A\comp_i$. Then on the right hand side it is counted
\[
 (d+1)^2-4d\ell+4\ell(\ell-1)=(d+1-2\ell)^2\ge0
\]
times and, if it is in all of the sets $A_i$, it is counted exactly $(d+1)^2$ times.
\end{proof}

\section{Linnik's Theorem}

For any prime number $p$, let $n_p$ be the least positive quadratic non-residue modulo~$p$.

\begin{thm}\label{cheap-sieve-bound}
Fix integers\/ $N>B\ge 2$. Let\/ $\cP=\{p \le N : p$ is prime and\/ $n_p>B\}$ and\/ $d=|\cP|$.
Then\/ $(d+1)\Psi(N^3,B)\le (5+dB^{-2})N^3$.
\end{thm}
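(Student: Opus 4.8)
The plan is to apply Lemma~\ref{comb-sieve-2} with $n=N^3$, using one set $A_i$ for each prime of $\cP$, chosen so that every $B$-smooth integer up to $N^3$ lies in $\bigcap_i A_i$ while each complement $A_i\comp$ has density close to $\tfrac12$ and each pairwise intersection $A_i\comp\cap A_j\comp$ has density close to $\tfrac14$; the combinatorial inequality then forces $\bigcap_i A_i$, and hence $\Psi(N^3,B)$, to be small.

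First, the key number-theoretic observation: if $p$ is prime with $n_p>B$, then $p$ is odd and $p>B$ (for $p=2$ there is no quadratic non-residue, and an odd prime $p\le B$ would have all of $1,\dots,p-1$ as non-zero quadratic residues, impossible since only $(p-1)/2$ classes are). Hence if $m$ is $B$-smooth and $p\in\cP$, then $\gcd(m,p)=1$ and $m$ is a product of primes smaller than $n_p$, each a quadratic residue modulo $p$, so $m$ is itself a quadratic residue modulo $p$. Writing $\bigl(\tfrac{\cdot}{p}\bigr)$ for the Legendre symbol, this says $\bigl(\tfrac{m}{p}\bigr)=1$ for every $B$-smooth $m$ and $p\in\cP$.

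We may assume $d\ge1$, since $d=0$ gives the claim from $\Psi(N^3,B)\le N^3$. Write $\cP=\{p_1,\dots,p_d\}$ and put $A_i=\{m\in[N^3]:\bigl(\tfrac{m}{p_i}\bigr)\ne-1\}$, so $A_i\comp$ is the set of non-residues modulo $p_i$ in $[N^3]$. Then $\bigl|\bigcap_i A_i\bigr|\ge\Psi(N^3,B)$ by the previous paragraph, and counting residues over complete blocks of consecutive integers (each block of $p_i$ integers has exactly $(p_i-1)/2$ non-residues mod $p_i$, and each block of $p_ip_j$ integers has exactly $(p_i-1)(p_j-1)/4$ common non-residues, by the Chinese Remainder Theorem) gives
\[
 |A_i\comp|\ge\tfrac{N^3}{2}\bigl(1-\tfrac1{p_i}\bigr)-\tfrac{p_i}{2}
 \quad\text{and}\quad
 |A_i\comp\cap A_j\comp|\le\tfrac{N^3}{4}\bigl(1-\tfrac1{p_i}\bigr)\bigl(1-\tfrac1{p_j}\bigr)+\tfrac{p_ip_j}{4}
\]
for $i\ne j$. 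Substituting these and $\bigl|\bigcap_i A_i\bigr|\ge\Psi(N^3,B)$ into Lemma~\ref{comb-sieve-2}, writing $\sigma=\sum_i 1/p_i$, and using $\sum_{i\ne j}(1-\tfrac1{p_i})(1-\tfrac1{p_j})\le\bigl(\sum_i(1-\tfrac1{p_i})\bigr)^2=(d-\sigma)^2$, the coefficient of $N^3$ collapses:
\[
 (d+1)^2-2d(d-\sigma)+(d-\sigma)^2=(d+1)^2-d^2+\sigma^2=2d+1+\sigma^2,
\]
so $(d+1)^2\Psi(N^3,B)\le(2d+1+\sigma^2)N^3+2d\sum_i p_i+\sum_{i\ne j}p_ip_j$. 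Finally, since the $p_i$ are distinct primes at most $N$, we have $d\le\pi(N)<N$, $\sum_i p_i\le dN$, $\sum_{i\ne j}p_ip_j<d^2N^2$, and (as each $p_i>B$) $\sigma<d/B$, whence $\sigma^2<d^2/B^2$; dividing by $d+1$ and using $\tfrac{2d+1}{d+1}<2$, $\tfrac{\sigma^2}{d+1}<\tfrac{d}{B^2}$, $\tfrac{d^2}{d+1}<d<N$ and $2N^2\le N^3$ yields $(d+1)\Psi(N^3,B)<(4+dB^{-2})N^3\le(5+dB^{-2})N^3$.

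The crux is the penultimate step. One must estimate $|A_i\comp|$ and $|A_i\comp\cap A_j\comp|$ accurately down to the $1/p_i$ correction: the naive bound $|A_i\comp|\ge N^3/2-p_i$ is false here (as $N^3$ dwarfs $p_i^2$), and retaining only the densities $\tfrac12$, $\tfrac14$ would leave a surplus term of order $d\sigma$, which is only $\le d^2/B$ rather than the required $d^2/B^2$. The point is that the two $1/p_i$ corrections enter the $-4d\sum|A_i\comp|$ and $+4\sum|A_i\comp\cap A_j\comp|$ parts of the sieve with opposite signs and cancel exactly; the exponent $3$ in $N^3$ is then precisely what makes the leftover $O(d^2N^2)$ errors negligible against $N^3$ after division by $d+1$, using only $d<N$.
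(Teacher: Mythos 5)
Your proposal is correct and follows essentially the same route as the paper: the same combinatorial sieve lemma with $n=N^3$, sets indexed by the primes of $\cP$, and the same bookkeeping of the $1/p$ corrections and $O(p_ip_j)$ boundary errors. The only (harmless) difference is that you put the multiples of $p_i$ into $A_i$ rather than $A_i\comp$, so your densities are $\tfrac12(1-\tfrac1{p_i})$ instead of the paper's $\tfrac12(1+\tfrac1{p_i})$, and your final constant comes out as $4$ rather than $5$, which still gives the stated bound.
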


\begin{proof}
Fix $n$ (to be chosen later) and,
for each $p_i\in\cP$, let $A_i= \big\{x\in[n]:\big(\frac{x}{p_i}\big)=1\}$. Each $A_i$ contains
all primes less than or equal to~$B$.
Furthermore, if $x,y\in A_i$ and $xy\le n$, then $xy\in A_i$. Hence,
\[
 \Big|\bigcap_{i=1}^d A_i\Big|\ge\Psi(n,B).
\]
The set $A\comp_i$ contains all integers up to $n$ in $(p_i+1)/2$ of the
congruence classes modulo~$p_i$, so
\[
 |A_i\comp|\ge \frac{n}{2} \bigg( 1 + \frac{1}{p_i} \bigg) - p_i,
\]
and hence
\[
 4d\sum_{i=1}^d|A_i\comp|\ge 2d^2n+2dn\sum_{p\in\cP} \frac{1}{p}-4d\sum_{p\in\cP}p.
\]
By the Chinese remainder theorem, when $i\ne j$ the set $A\comp_i\cap A\comp_j$ contains all integers
up to $n$ in $(p_i+1)(p_j+1)/4$ of the congruence classes modulo $p_ip_j$. Thus
\[
 |A_i\comp\cap A_j\comp| \le \frac{n}{4} \bigg( 1 + \frac{1}{p_i} \bigg) \bigg( 1 + \frac{1}{p_j} \bigg) + p_ip_j,
\]
and hence
\begin{align*}
 4\sum_{i\ne j}|A_i\comp\cap A_j\comp|&\le d(d-1)n+2(d-1)n\sum_{p\in\cP} \frac{1}{p} + n\!\!\!\sum_{p,q\in\cP,\,p\ne q} \frac{1}{pq}+4\!\!\sum_{p,q\in\cP,\,p\ne q}pq.
\end{align*}
Hence by Lemma~\ref{comb-sieve-2},
\begin{align*}
 (d+1)^2\Psi(n,B)
 &\le(d+1)^2n-2d^2n-2dn\sum_{p\in\cP}\frac{1}{p}+4d\sum_{p\in\cP}p\\
 &\qquad+d(d-1)n+2(d-1)n\sum_{p\in\cP}\frac{1}{p} + n\!\!\!\sum_{p,q\in\cP,\,p\ne q}\frac{1}{pq}+4\!\!\sum_{p,q\in\cP,\,p\ne q}pq\\
 &\le(d+1)n+4d\sum_{p\in\cP}p+n\!\!\!\sum_{p,q\in\cP,\,p\ne q}\frac{1}{pq}+4\!\!\sum_{p,q\in\cP,\,p\ne q}pq
\end{align*}
Note that $n_p<p$, and so each $p\in\cP$ is at least $B$ and at most~$N$. Hence
\[(d+1)^2\Psi(n,B)\le (d+1)n+4d^2N+\frac{nd^2}{B^2}+4d^2N^2.
\]
We now fix $n=N^3$. Then, as $d\le N$,
\[
 (d+1)^2\Psi(N^3,B)\le (d+1)N^3+4N^3+d^2B^{-2}N^3+4dN^3,
\]
and so $(d+1)\Psi(N^3,B)\le(5+dB^{-2})N^3$ as required.
\end{proof}

\begin{cor}
Fix $\eps>0$. Then there is a constant $C_\eps$ such that for all $N$ there are at most $C_\eps$
primes $p\le N$ with $n_p > N^\eps$.
\end{cor}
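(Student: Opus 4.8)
The plan is to apply Theorem~\ref{cheap-sieve-bound} with $B$ chosen to be a small power of $N$, and then to use Lemma~\ref{psi-1/u} to see that $\Psi(N^3,B)$ is at least a constant (depending only on $\eps$) times $N^3$. Combining these two facts will immediately force $|\cP|$ to be bounded. Before doing this I would dispose of trivialities: replacing $N$ by $\lfloor N\rfloor$ we may assume $N$ is a positive integer (since $\{p\le N:n_p>N^\eps\}\subseteq\{p\le\lfloor N\rfloor:n_p>\lfloor N\rfloor^\eps\}$), and since $n_p<p\le N$ we may assume $\eps<1$, as for $\eps\ge1$ there are no primes $p\le N$ with $n_p>N^\eps\ge N$. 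Finally, we may assume $N\ge N_0$ for any fixed $N_0=N_0(\eps)$ of our choosing, since for the finitely many $N<N_0$ the number of primes $p\le N$ is at most $N_0$, and this can be absorbed into $C_\eps$.

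So fix $\eps\in(0,1)$ and $N$ large, and set $B=\lfloor N^\eps\rfloor$. For $N$ large this is an integer with $2\le B<N$ and $B\ge N^{\eps/2}$. Put $u=6/\eps$, so that $(N^3)^{1/u}=N^{\eps/2}$; then by Lemma~\ref{psi-1/u} and the fact that $\Psi(n,\cdot)$ is nondecreasing in its second argument,
\[
 \Psi(N^3,B)\ge\Psi\big(N^3,(N^3)^{1/u}\big)\ge c_{6/\eps}\,N^3.
\]
Let $\cP=\{p\le N:p\text{ prime},\ n_p>B\}$ and $d=|\cP|$. Since $B\le N^\eps$, every prime $p\le N$ with $n_p>N^\eps$ satisfies $n_p>B$ and hence lies in $\cP$, so it is enough to bound $d$. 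By Theorem~\ref{cheap-sieve-bound},
\[
 (d+1)\,c_{6/\eps}\,N^3\le(d+1)\Psi(N^3,B)\le(5+dB^{-2})N^3,
\]
so $(d+1)c_{6/\eps}\le 5+dB^{-2}$. Since $B\ge N^{\eps/2}\to\infty$, for $N$ large we have $B^{-2}\le c_{6/\eps}/2$, and then $\tfrac12 d\,c_{6/\eps}\le 5-c_{6/\eps}\le 5$, i.e.\ $d\le 10/c_{6/\eps}$. Taking $C_\eps=\max\{10/c_{6/\eps},\,N_0(\eps)\}$ completes the proof.

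The bulk of this is routine bookkeeping, since the real content is already in Theorem~\ref{cheap-sieve-bound} and Lemma~\ref{psi-1/u}. The one point requiring (mild) care is the choice of the exponent defining $B$: it must be small enough that $N^3$ still contains a positive proportion of $B$-smooth numbers for a \emph{fixed} value of $u$, so that the constant $c_u$ from Lemma~\ref{psi-1/u} does not degrade as $N\to\infty$; yet $B$ must tend to infinity, so that the error term $dB^{-2}$ appearing in Theorem~\ref{cheap-sieve-bound} is eventually swallowed by the main term $d\,c_{6/\eps}$. Choosing $B=\lfloor N^\eps\rfloor$ and $u=6/\eps$ (any $u>3/\eps$ would do) achieves both of these.
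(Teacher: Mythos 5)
Your proof is correct and takes essentially the same route as the paper: apply Theorem~\ref{cheap-sieve-bound} with $B$ of order $N^\eps$ and use Lemma~\ref{psi-1/u} to bound $\Psi(N^3,B)$ below by a constant multiple of $N^3$, forcing $d$ to be bounded. The only differences are bookkeeping: you take $B=\lfloor N^\eps\rfloor$ and $u=6/\eps$ (using monotonicity of $\Psi$ in its second argument) where the paper takes $B=N^\eps$ and $u=3/\eps$, which if anything treats the integrality of $B$ and small $N$ a little more carefully than the paper does.
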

\begin{proof}
Recall from Lemma~\ref{psi-1/u} that $\Psi(n,n^{1/u})\ge c_u n$. Take $u = 3/\eps$ and $n=N^3$ so that
$\Psi(N^3,N^\eps)\ge c_{3/\eps} N^3$. Hence, by Theorem~\ref{cheap-sieve-bound},
the number $d$ of primes $p\le N$ with $n_p > B = N^\eps$ satisfies
\[
 d\left( \Psi(N^3,N^\eps)-\frac{N^3}{N^{2\eps}} \right) \, \le \, 5N^3-\Psi(N^3,N^\eps).
\]
Thus
\[
 d \, \le \, \frac{5N^3}{\Psi(N^3,N^\eps)-N^{3-2\eps}} \, \le \, \frac{5}{c_{3/\eps}-N^{-2\eps}},
\]
which is bounded as $N\to\infty$.
\end{proof}

\section*{Acknowledgement}
The work of the first two authors was partially supported by NSF grant DMS 1600742, and
work of the second author was also partially supported by MULTIPLEX grant 317532.
The work of the fourth author was partially supported by CNPq (Proc.~303275/2013-8) and FAPERJ (Proc.~201.598/2014).
The research in this paper was carried out while the third, fourth and fifth 
authors were visiting the University of Memphis.

\end{document}